\theoremstyle{plain}
 \newtheorem{thm}{Theorem}
 \newtheorem{prop}[thm]{Proposition}
 \newtheorem*{op*}{Open Problem} 
 \newtheorem*{frthmA*}{Theorem A} 
 \newtheorem*{frcorB*}{Corollary B} 
  \newtheorem*{frthmAp*}{Theorem A'} 
\theoremstyle{definition}
 \newtheorem{exm}[thm]{Example}
 \newtheorem{dfn}[thm]{Definition}
\theoremstyle{remark}
 \newtheorem*{rem}{Remark}
\newcommand{\R}{\mathbb{R}} 
\newcommand{\Z}{\mathbb{Z}} 
\newcommand{\p}{\mathcal{P}} 
\newcommand{\s}{\mathcal{S}} 
\DeclareMathOperator{\Ker}{Ker}
\DeclareMathOperator{\Img}{Im}
\DeclareMathOperator{\id}{\mathrm{id}}
\DeclareMathOperator{\h}{\mathrm{H}} 
\DeclareMathOperator{\C}{\mathrm{C}} 
\DeclareMathOperator{\QM}{\mathrm{QM}}
\renewcommand{\leq}{\leqslant}
\renewcommand{\geq}{\geqslant}
\renewcommand{\setminus}{\smallsetminus}
\title[CUP PRODUCTS IN BOUNDED COHOMOLOGY VIA ALIGNED CHAINS]{TRIVIAL CUP PRODUCTS IN BOUNDED COHOMOLOGY OF THE FREE GROUP VIA ALIGNED CHAINS}
\author[Sofia Amontova]{
Sofia Amontova} 
\address{ 
Universit\'e de Gen\`eve
}
\email{Sofia.Amontova@unige.ch}
\author[Michelle Bucher]{
Michelle Bucher} 
\address{Universit\'e de Gen\`eve}
\email{Michelle.Bucher-Karlsson@unige.ch}
\thanks{Supported by the Swiss National Science Foundation, NCCR SwissMAP} 
\begin{document}

\begin{abstract}
We prove that the cup product of $\Delta$-decomposable quasimorphisms,  Brooks quasimorphisms or Rolli quasimorphisms with any bounded cohomology class of arbitrary positive degree is trivial. \end{abstract}

\maketitle

\section{Introduction}

Despite its wide range of uses in mathematics, bounded cohomology turns out to be hard to compute in general. 
Even for the case of a non-abelian free group $F$, while something can be said about the bounded cohomology group with trivial real coefficients $\h_b^n(F, \R)$ up to degree $3$, for degrees $4$ and higher it is still not known whether $\h_b^n(F, \R)$ vanishes or not.
Expecting the former to hold, we consider  the following weaker question:
\begin{op*} For all $k >0$ is the cup product 
	\[
		\cup: \h_b^2(F,\R) \times \h_b^k(F,\R) \longrightarrow \h_b^{k+2}(F,\R) 
	\]
trivial?
\end{op*}
In recent years a whole variety of examples of trivial cup products have been exhibited in the case $k=2$, namely
it holds that:
\begin{thm}\label{summary}
For all quasimorphisms $\phi$ and $\psi$ satisfying either of the following conditions
\begin{enumerate}
	\item[(a)] \emph{(Bucher, Monod, \cite{buchermonod18})} $\phi$ and $\psi$ are Brooks quasimorphisms,
	\item[(b)] \emph{(Heuer, \cite{heuer17})} $\phi$ is a $\Delta$-decomposable quasimorphism and $\psi$ a $\Delta$-con\-tin\-u\-ous quasimorphism,
	\item[(c)] \emph{(Heuer, \cite{heuer17})} Each of $\phi$ and $\psi$ is either a Brooks quasimorphism on non-selfoverlapping words  or a Rolli quasimorphism,
	\item[(d)] 
	\emph{(Fournier-Facio, \cite{facio20})}  $\phi$ and $\psi$ are certain Calegari quasimorphisms,\footnote{We refer to \cite{facio20} for the precise statement and the definition of Calegari quasimorphisms.}
\end{enumerate}
the cup product
\begin{align*}
\cup: \h_b^2(F,\R) \times \h_b^2(F,\R) &\longrightarrow \h_b^4(F,\R)\\
	([\delta^1\phi], [\delta^1\psi]) &\longmapsto [0]
\end{align*}
vanishes. 
\end{thm}

Although the publication dates of Theorem \ref{summary} (a) and (b)-(c) differ, these two results, which have a significant overlap, were announced at the same conference in 2017. 

In the present paper we provide an intermediate result towards answering the Open Problem for all $k > 0$:

\begin{frthmA*}
Let $k>0$ and $\alpha\in \h_b^k(F,\R)$ be arbitrary. Let $\phi$ be a quasimorphism. Then the cup product 
\begin{align*}
\cup: \h_b^2(F,\R) \times \h_b^k(F,\R) &\longrightarrow \h_b^{k+2}(F,\R)\\
	([\delta^1\phi], \alpha) &\longmapsto [0]
\end{align*}
vanishes whenever 
\begin{enumerate}
	\item[(a)] $\phi$ is a $\Delta$-decomposable quasimorphism,
	\item[(b)] $\phi$ is a Brooks quasimorphism,
	\item[(c)]  $\phi$ is a Rolli quasimorphism. 
\end{enumerate}
\end{frthmA*}

Even in the case of $k=2$, our Theorem A is a priori stronger than Theorem \ref{summary}, since it is not known whether every quasimorphism is $\Delta$-decomposable, $\Delta$-continuous or a Calegari quasimorphism. Moreover for $k=3$, since both $\h_b^2(F,\R)$ and $\h_b^3(F,\R)$ are known to be infinite dimensional (see \cite{brooks81}, \cite{Grigorchuk} as well as \cite{soma97}),  our Theorem gives infinitely many examples of the triviality of the cup product of non trivial classes.

Important examples of $\Delta$-decomposable quasimorphisms are the classical Brooks quasimorphisms on non-selfoverlapping words and Rolli quasimorphisms. Thus, Theorem A (c) is a particular case of Theorem A (a). For Brooks quasimorphisms on selfoverlapping words we are however not aware of a construction as $\Delta$-decomposable quasimorphisms. As a consequence, we will deal with case (b) separately in Section \ref{Section Brooks}.  



Similar to the generalisation of Heuer's vanishing result in Theorem \ref{summary}(b) by Fournier-Facio in Theorem \ref{summary}(d), we expect the generalization of Theorem A to Fournier-Facio's setting  to be straightforward following \cite{facio20}.
\\ \\
Our strategy to prove Theorem A (a) is to apply the geometric proof from \cite{heuer17} to the setting of aligned cochains from \cite{buchermonod19}. On the one hand, Nicolaus Heuer introduced in \cite{heuer17} the notion of $\Delta$-decomposition which abstracts the geometric properties of both Brooks quasimorphisms on non-selfoverlapping words and Rolli quasimorphisms needed to prove Theorem \ref{summary}(b). On the other hand, the aligned chain complex is a geometric construction introduced by Monod and the second author in \cite{buchermonod19} to prove the vanishing of the continuous bounded cohomology of automorphism groups of trees and also the vanishing of the cup product of Brooks quasimorphisms (Theorem \ref{summary}(a)) in \cite{buchermonod18}. In our setting, the aligned cochain complex allows for consequent simplifications in the proof by Heuer to the point where we can relax the hypothesis on the second factor of the cup product in Theorem \ref{summary}(b) significantly; that is, from being a $\Delta$-continuous quasimorphism, and hence an a priori strict subset of $\h^2_b(F)$, to an arbitrary cohomology class in any degree $k>0$.

The proof of Theorem A (b) is essentially a translation of the combinatorial aspects of the proof of Theorem A (a) to Brooks quasimorphisms and does not rely on $\Delta$-decompositions. For a unified approach to these two proofs, see the last remark of Section \ref{section: proof}. 


The paper is structured as follows: In Section \ref{section: bounded cohomology} we recall a selection of basic notions on bounded cohomology theory. Section \ref{section: aligned cochain complex} introduces the inhomogeneous aligned cochain complex, where we follow \cite{buchermonod19}. The proof of Theorem A (b) for Brooks quasimorphisms is given in Section \ref{Section Brooks}. Then to define 
$\Delta$-decompositions, the induced $\Delta$-decomposable quasimorphisms and their special cases of Brooks quasimorphisms and Rolli quasimorphisms, we follow \cite{heuer17} in Section \ref{section: decomposition}. Finally, Theorem A (a) is proven in Section \ref{section: proof}.  

\subsection*{Acknowledgements} This paper is part of a master thesis project by the first author under the supervision of the second author within the framework of the Master Class program organized by NCCR SwissMAP. We are especially grateful to Nicolas Monod for his interesting feedback while refereeing the thesis and to Pierre de la Harpe, Francesco Fournier-Facio, Clara L{\"o}h and Marco Moraschini for several{} useful comments. We also wish to thank an anonymous referee for his or her pertinent corrections. 

\section{Bounded cohomology}\label{section: bounded cohomology}

We shortly define bounded cohomology, originally introduced in \cite{Gromov}. For more details, the reader is referred to \cite{frigerio17}.

\subsection{Inhomogeneous cochains and bounded cohomology} We give the definition of (bounded) cohomology of a group $G$ with trivial real coefficients by using the inhomogeneous  resolution.
For all $n \geq 0$ we denote by
\begin{align*}
\C^n(G, \R) &= \{\varphi: G^n \to \R\} \text{ and }\\
\C^n_{b}(G, \R) &= \{\varphi\in C^n(G,\R) \mid  ||\varphi||_\infty < \infty\} ,
\end{align*}
where
\[
||\varphi ||_\infty := \sup\left\{|\varphi(g_1,\dots,g_n)| \mid (g_1, \dots, g_n) \in  G^n\right\} \in [0,+\infty],
\]
the set of \emph{cochains} and \emph{bounded} cochains respectively. 
The \emph{coboundary map} $\delta^n: \C^n(G, \R) \to \C^{n+1}(G, \R)$ is defined by
\begin{align*}
\delta^n(\varphi)(g_1, \dots, g_{n+1})=
&\varphi(g_2, \dots, g_{n+1}) \\
&+\sum_{j=1}^n(-1)^j \varphi(g_1, \dots, g_jg_{j+1}, \dots, g_{n+1})\\
&+(-1)^{n+1} \varphi(g_1, \dots, g_n)
\end{align*}
and it restricts to the \emph{{bounded} coboundary map} $\delta^n: \C^n_{b}(G, \R) \to \C^{n+1}_{b}(G, \R)$.
Since $\delta^{n+1} \circ \delta^{n} = 0$ holds for all $n \geq 0$, we have that 
$\left(\C^*_{(b)}(G,\R), \delta^*\right)$
is a \emph{(bounded)} cochain complex. 
Moreover, the {(bounded)} $n$-cochain $\varphi$ is called a
\emph{(bounded) $n$-cocycle} if $\delta^{n+1}\varphi=0$, and a
 \emph{(bounded) $n$-coboundary} if there exists a \textit{(bounded) $(n-1)$-cochain} $\psi$ such that $\delta^{n-1}\psi=\varphi$.
The {(bounded)} cohomology of $G$ is then defined as the cohomology of the {(bounded)} cochain complex. More precisely:
\begin{dfn}
 The $n$-th \emph{{(bounded)} cohomology of $G$ with trivial coefficients} is defined as
 	\[
 		\h^n_{(b)}(G,\R) :
 		= \frac{\Ker\left[\delta : \C^n_{(b)}(G,\R) \to \C^{n+1}_{(b)}(G,\R)\right]}
 		{\Img\left[\delta : \C^{n-1}_{(b)}(G,\R) \to \C^n_{(b)}(G,\R)\right]}.
 	\]
\end{dfn}
We shall denote by $\delta^*$ both the bounded and the unbounded coboundary maps. 

\subsection{Cup product}
We introduce the operation that allows to obtain elements of {(bounded)} cohomology groups of higher degrees from elements of {(bounded)} cohomology groups of lower degrees.
\begin{dfn}
The \textit{cup product} is the map given by
\begin{align*}
\cup: \h^n(G, \R) \times \h^m(G, \R) &\longrightarrow \h^{n+m}(G, \R)\\
([f], [g]) &\longmapsto [f] \cup [g],
\end{align*}
where $[f] \cup [g]$ is represented by the cocycle $f \cup g \in \C^{n+m}(G,\R)$ that is defined as follows
\[
f \cup g: (g_1, \dots, g_n, g_{n+1}, \dots, g_{n+m}) \longmapsto f(g_1, \dots, g_n)\cdot g(g_{n+1}, \dots, g_{n+m}).
\]
This map induces a well defined map on {bounded} cohomology that we also call the cup product
	\[
	\cup: \h^n_{b}(G, \R) \times \h^m_{b}(G, \R) \longrightarrow \h^{n+m}_{b}(G, \R).
	\]
\end{dfn}

\subsection{Quasimorphisms on a free group} 
We recall that for any group $G$ a \emph{quasimorphism} is a map $\phi: G \to \R$ for which there exists a constant $D>0$ such that
\[
\sup_{g,h \in G} |\phi(g)+\phi(h)-\phi(gh)| < D.
\]
There is a straightforward connection to bounded cohomology: for any such $\phi$ 
the norm of $||\delta^1\phi||_\infty$ is bounded by $D$ and therefore $\delta^1 \phi \in \C_b^2(G, \R)$ is a cocycle and represents a cohomology class in $\h^2_b(G,\R)$. 

For the case of a non-abelian free group $F$, the study of $\h_b^2(F,\R)$ boils down to studying the set of quasimorphisms on $F$ which we denote by $\QM(F)$, since the following holds
	\[
	\h_b^2(F,\R)  \cong \frac{\QM(F)}{[\mathrm{Hom}(F,\R) \oplus \ell^\infty(F)]},
	\]
where $\mathrm{Hom}(G,\R)$ and $\ell^\infty(G)$ denote the set of homomorphisms and real-valued bounded maps on $G$ respectively (see for example \cite[Proposition 3.2]{Grigorchuk}).
For instance, with help of various explicit constructions of certain families of quasimorphisms it was proved that $\h_b^2(F, \R)$ is non-vanishing and moreover infinite-dimensional, e.g. \textit{Brooks quasimorphisms}, due to Brooks in \cite{brooks81} (see also \cite{Grigorchuk}), and \textit{Rolli quasimorphisms}, due to Rolli in \cite{rolli09}.

\begin{exm}{\emph{Brooks quasimorphisms.}}\label{Brooks quasimorphisms}
Let $\mathcal{S} $ be the symmetrization of a free generating set of $F$ and $w$ be any reduced word in $F$. 
Then a \emph{Brooks quasimorphism} on the word $w$ is a function $\phi : F \to \Z$ loosely defined as
	\[
	\phi_w(g) = \#\{\text{subwords $w$ in g}\} - \#\{\text{subwords $w^{-1}$ in g}\},\quad \text{for all $g \in F$}.
	\]
	(For a precise definition, we refer to Section \ref{Section Brooks}.) 
A reduced word $w$ in the non-abelian free group $F$ is called  a \emph{non-selfoverlapping word}  if there do not exist words $s$ and $m$ with $s$ non-trivial such that $w=sms$ as a reduced word.  In this case $\phi_w$ counts disjoint occurences of $w^{\pm1}$. Otherwise we say that $w$ is a \emph{selfoverlapping word} and  then $\phi_w$ counts occurences of $w$ which may overlap. 


\end{exm}

\begin{exm}{\emph{Rolli quasimorphisms}.}
Let $\s = \{x_1, \dots, x_\mathbf{n}\}\cup\{x_1^{-1},\dots,x_\mathbf{n}^{-1}\} $ be the symmetrization of a free generating set of $F$. 
A \emph{Rolli quasimorphism} is a map $\phi: F \to \Z$ defined as
\[
g \mapsto \sum_{i=1}^k \lambda_{j_i}(m_i), 
\]
where
\begin{itemize} 
\item $m_1, \dots, m_k$ are the unique non-zero integers such that it holds that
\[
g = x_{j_1}^{m_1} \cdot \ldots \cdot x_{j_k}^{m_k},
\]
where no consecutive $j_i$ are the same.
\item $\lambda_1, \dots, \lambda_\mathbf{n}$ are bounded alternating\footnote{$\lambda: \Z \to \R$ is \emph{alternating}, if it holds that $\lambda(-n)=-\lambda(n)$ for all $n \in \Z$.} functions.
\end{itemize}
\end{exm}
We encounter another viewpoint for these examples of quasimorphisms in Section \ref{section: decomposition}.

\section{The inhomogeneous aligned cochain complex}\label{section: aligned cochain complex}

In this section we briefly present the inhomogeneous version of aligned cochain complexes of non-abelian free groups first introduced in \cite{buchermonod19} in the homogeneous context to prove the vanishing of continuous bounded cohomology for certain automorphisms groups of a tree. 

Let again $F=F_\mathbf{n}$ be a free group of rank $\mathbf{n}$ and $\s$ be the symmetrization of a generating set of $F$. 

For any $n \geq 0$ we define
	\[
		\mathcal{B}^{n} = \left\{(g_1,\dots,g_n)\in F^n \ \bigg | \
		\parbox[c]{2,9in}{ $g_ig_{i+1}$ is reduced with respect to $\s$ \;\;  $\forall 1\leq i < n$,\\
		 $g_i\neq \mathrm{id} \;\; \forall 1\leq i \leq n$.}
		\right\}.
	\]

Geometrically, this means that in the Cayley graph of $G$ with respect to $\s$ the corresponding homogenous $(n+1)$-tuple $(\mathrm{id},g_1,g_1g_2,\dots,g_1\cdot \ldots \cdot g_n)$ of distinct vertices is ordered on the geodesic segment between $\mathrm{id}$ and $g_1\cdot \ldots \cdot g_n$ (see Figure \ref{triangle_picture2}).

	\begin{figure}[h]
			\center
			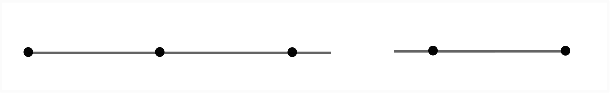
			\caption{}
			\label{triangle_picture2}
		\end{figure}

\begin{dfn} The \emph{(inhomogeneous) aligned cochain complex}  is defined as the set of functions
$$\mathscr{A}^n(F, \R) =\{\varphi: \mathcal{B}^n \longrightarrow \R\}.$$
The \emph{bounded (inhomogeneous) aligned chain complex} $\mathscr{A}_b^n(F, \R) $ consists of bounded aligned cochains:
$$\mathscr{A}^n_b(F, \R) =\{ \varphi\in \mathscr{A}^n(F, \R)\mid \| \varphi\|_\infty<\infty\}. $$
\end{dfn}

\begin{dfn} The \emph{(bounded) (inhomogeneous) aligned alternating  cochain complex} $\mathscr{A}^n_{+,(b)}(F, \R)$ is defined as the set of functions $\varphi\in \mathscr{A}^n_{(b)}(F, \R)$ such that 

$$\varphi(g_1,\dots,g_n)=(-1)^{\lceil n/2 \rceil} \varphi(g_n^{-1},\dots, g_1^{-1})$$
for every $(g_1,\dots,g_n)\in  \mathcal{B}^{n}$. 
\end{dfn}

These four complexes are endowed with their inhomogenous coboundary 
\[
\boldsymbol{\delta}^n: \mathscr{A}^n_{(+),(b)}(F, \R) \longrightarrow \mathscr{A}^n_{(+),(b)}(F, \R).\] 
There is a chain map 
$$A: \mathscr{A}^n_{(b)}(F, \R)\longrightarrow \mathscr{A}^n_{+,(b)}(F, \R)$$
defined as
$$A(\varphi)(g_1,\dots,g_n)=\frac{1}{2}\left(\varphi(g_1,\dots,g_n)+(-1)^{\lceil n/2 \rceil}\varphi(g^{-1}_n,\dots,g^{-1}_1)\right),$$
for every $\varphi \in \mathscr{A}^n_{(b)}(F, \R)$ and $(g_1,\dots,g_n)\in  \mathcal{B}^{n}$. 

Since $\mathcal{B}^n$ is a subset of $F^n$, we have by restriction natural maps
$$r: \C^n_{(b)}(F,\R)\longrightarrow \mathscr{A}^n_{(b)}(F, \R)$$
which commute with the coboundary maps.  It is proven in \cite[Proposition 8]{buchermonod18} that the composition of the chain maps 
$$A\circ r: \C^n_{b}(F,\R)\longrightarrow \mathscr{A}^n_{+,b}(F, \R)$$
induces an isomorphism between the (bounded) cohomology of the free group $F$ and the cohomology of the cocomplex $\left(\mathscr{A}^*_{(b)}(F, \R), \boldsymbol{\delta}^*\right)$. 

Let now $\alpha_1, \alpha_2 \in H^*_b(F,\R)$ be represented by inhomogenous cocoycles $\omega_1\in \C^n_{b}(F,\R)$ and $\omega_2\in \C^m_{b}(F,\R)$. To show that the cup product $\alpha_1\cup \alpha_2$ vanishes it is hence sufficient to show that 
$$A\circ r(\omega_1\cup\omega_2)=A(r(\omega_1)\cup r(\omega_2))$$
is a coboundary in  $\mathscr{A}^{n_1+n_2}_{+,b}(F, \R)$. This immediately follows if $r(\omega_1)\cup r(\omega_2)$ is a coboundary in $\mathscr{A}^{n_1+n_2}_{b}(F, \R)$. Indeed, if 
$$r(\omega_1)\cup r(\omega_2)=\boldsymbol{\delta}\varphi,$$
for some $\varphi\in \mathscr{A}^{n_1+n_2-1}_{b}(F, \R)$ then
$$A(r(\omega_1)\cup r(\omega_2))=A\boldsymbol{\delta}\varphi=\boldsymbol{\delta}(A\varphi).$$
This is the strategy we will use to prove Theorem A. 

\section{Proof of Theorem A (\normalfont{\lowercase{b}})}\label{Section Brooks}

Let $\mathcal{S} $ be the symmetrization of a free generating set of $F$ and $w$ be any nontrivial element of $F$, i.e. a nonempty reduced word in $\mathcal{S} \cup \mathcal{S} ^{-1}$. Define a function $\chi_w:F\rightarrow \{ +1,0,-1\}$ as
$$\chi_w(g)=
\left\{ \begin{array}{rl}
1, &\mathrm{if \ } g=w,\\
-1, &\mathrm{if \ } g=w^{-1},\\
0, &\mathrm{else}.
\end{array}\right.
$$
Let $\ell$ denote the length of $w$. The Brooks quasimorphism on $w$ is the function $\phi_w:F\rightarrow \mathbb{Z}$  defined by
$$\phi_w(g)=\sum_{j=1}^{m-\ell+1}\chi_w(x_j \cdot \ldots \cdot x_{j+\ell-1}),$$
if $g=x_1\cdot \ldots \cdot x_m$ is a reduced expression of length $m$, where the sum is considered to be equal to $0$ if the index set is empty, i.e. if $m<\ell$.

\begin{proof}[Proof of Theorem A (b)]

Due to the above discussion we can prove Theorem A (b) by showing that the cup product of the restriction to aligned cochains of cocycles representing the cohomology classes in consideration is a coboundary. Thus, it is enough to consider the restriction of Brooks quasimorphisms to aligned chains, which we still denote by $\phi_w \in \mathscr{A}^1(F,\R)$, and to show that for any $k >0$ and $\omega \in \mathscr{A}_b^k(F,\R)$, there exists $\beta  \in \mathscr{A}_b^{k+1}(F,\R)$ such that 
$$\boldsymbol{\delta} \beta = \boldsymbol{\delta}\phi_w \cup \omega.$$
Let $\eta \in \mathscr{A}^k(F, \R)$ be defined by 
\begin{equation}\label{eq: choiceetaBROOKS}
\eta: (g,h_1,\dots,h_{k-1}) \longmapsto \sum_{j=1}^{m-\ell+1} \chi_w( x_j \cdot \ldots \cdot x_{j+\ell-1})\omega(z_{j+\ell}(g),h_1,\dots,h_{k-1}),
\end{equation}
where $g=x_1\cdot \ldots \cdot x_m$ is a reduced expression of length $m$, and $z_j(g)=x_{j}\cdot \ldots \cdot x_m$. 

Define  $\beta  \in \mathscr{A}_b^{k+1}(F,\R)$ by
$$ \beta=\phi_w \cup \omega +\boldsymbol{\delta} \eta.$$
By construction, $\boldsymbol{\delta} \beta = (\boldsymbol{\delta}\phi_w) \cup \omega$, so that we only need to show that $\beta$ is bounded. Let $(g,h_1,\dots,h_k)$ in $\mathcal{B}^{n}$ be arbitrary. Since this chain is aligned, the concatenation of a reduced expression for $g$ and $h_1$ respectively, is a reduced expression (obviously for $gh_1$). Let $g=x_1\cdot \ldots \cdot x_m$ and $h_1=x_{m+1}\cdot \ldots \cdot x_{m+n}$ be reduced expressions. 
 
Then by definition, we have that
\begin{align}\label{eq01BROOKS}
\beta(g,h_1,\dots,h_k)
=&\phi_w(g)\omega(h_1,\dots,h_k) \\
+&\Big\{\eta(h_1,\dots,h_k)
-\eta(gh_1, h_2\dots,h_k)\nonumber\\
-&\sum_{i=1}^{k-1}(-1)^i\eta(g,h_1,\dots,h_ih_{i+1},\dots,h_k)\nonumber\\
-&(-1)^k\eta(g,h_1,\dots,h_{k-1})\Big\}\nonumber.
\end{align}
By expanding $\eta$ as in its definition \eqref{eq: choiceetaBROOKS} we obtain 
\begin{align}\label{eq02BROOKS}
&\quad\beta(g,h_1,\dots,h_k) \\[0.3em]&= 
\phi_w(g)
\omega(h_1,\dots,h_k)\nonumber\\
&+\sum_{j=m+1}^{m+n-\ell+1}\chi_w(x_j \cdot \ldots \cdot x_{j+\ell-1})\omega(z_{j+\ell}(h_1), h_2, \dots, h_k)\nonumber\\
&-\sum_{j=1}^{m+n-\ell+1}\chi_w(x_j \cdot \ldots \cdot x_{j+\ell-1})\omega(z_{j+\ell}(gh_1), h_2, \dots, h_k)\nonumber\\
&-\sum_{i=1}^{k-1}(-1)^i \sum_{j=1}^{m-\ell-1} \chi_w(x_j \cdot \ldots \cdot x_{j+\ell-1})\omega(z_j(g), h_1,\dots,h_ih_{i+1},\dots,h_k)\nonumber\\
&-(-1)^k \sum_{j=1}^{m-\ell+1}\chi_w(x_j \cdot \ldots \cdot x_{j+\ell-1})\omega(z_j(g),h_1,\dots,h_{k-1}).\nonumber
\end{align}
Using the definition of $\phi_w(g)$ and the cocycle relation 
\begin{align} \label{eq03BROOKS}
\omega(z_j(g)h_1,\dots,h_k)
=&\omega(h_1,\dots,h_k)\\
-&\sum_{i=1}^{k-1}(-1)^i\omega(z_j(g),h_1,\dots,h_ih_{i+1},\dots,h_k)\nonumber\\
-&(-1)^k\omega(z_j(g),h_1,\dots,h_{k-1}),\nonumber
\end{align}
the expression in \eqref{eq02BROOKS} reduces to 
\begin{align} 
\beta(g,h_1,\dots,h_k) = &\sum_{j=m+1}^{m+n-\ell+1}\chi_w(x_j \cdot \ldots \cdot x_{j+\ell-1})\omega(z_{j+\ell}(h_1), h_2, \dots, h_k)\nonumber\\
&-\sum_{j=1}^{m+n-\ell+1}\chi_w(x_j \cdot \ldots \cdot x_{j+\ell-1})\omega(z_{j+\ell}(gh_1), h_2, \dots, h_k)\nonumber\\
&+\sum_{j=1}^{m-\ell+1} \chi_w(x_j \cdot \ldots \cdot x_{j+\ell-1})\omega(z_j(g)h_1, h_2,\dots,h_{k}).\nonumber
\end{align}

Now the $j$-th terms in the first and second sum cancel for $j\geq m+1$ while the $j$-th terms in the second and third sum cancel for $j\leq m-\ell+1$, so that we are left with 
\begin{align} \label{eq05BROOKS}
\beta(g,h_1,\dots,h_k) = &-\sum_{j=m-\ell+2}^{m}\chi_w(x_j \cdot \ldots \cdot x_{j+\ell-1})\omega(z_{j+\ell}(gh_1), h_2, \dots, h_k)),\nonumber
\end{align}
which is bounded by $(\ell-1)\|\omega\|_\infty$. 
\end{proof}

\section{$\Delta$-decompositions and induced quasimorphisms of the free group}\label{section: decomposition}
We recall the notion of $\Delta$-decomposable quasimorphisms introduced by Heuer as a way of uniformizing the geometric properties of both Brooks quasimorphisms on non-selfoverlapping words and Rolli quasimorphims. For more details, the reader is referred to \cite[Section 3]{heuer17}

\subsection{$\Delta$-decompositions}
Let $F$ be a free group and $\mathcal{S}$ the symmetrization of a free generating set of $F$. For any symmetric set $\mathcal{P} \subset F$ satisfying $\id \not\in \mathcal{P}$ we call its elements \textit{pieces}. Furthermore, let $\mathcal{P}^*$ be the set of finite sequences of pieces, more precisely:
\[
\mathcal{P}^* := \{(g_1, \dots, g_K) \mid g_j \in \mathcal{P}, 1 \leq j \leq K < \infty \} \cup \{\text{empty sequence}\}.
\]
For any $g=(g_1, \dots, g_K)$ in $\mathcal{P}^*$, we shall write $|g|$ for the length $K$ of the tuple $(g_1,\dots,g_K)$, not to be confused with the length of $g$ with respect to $\s$.

\begin{dfn} \normalfont \label{def:Delta-dec}
A \textbf{$\Delta$-decomposition} of $F$ into the pieces $\mathcal{P}$ is a map 
\begin{align*}
\Delta: F &\longrightarrow \mathcal{P}^* \\
g &\longmapsto (g_1, \dots, g_K),
\end{align*}
that satisfies the following properties
\begin{enumerate} 
	\item[\textbf{[A]}] \textbf{for single words:}
	For every $g \in F$ with $\Delta(g)=(g_1, \dots, g_K)$ we have the following:
		\begin{itemize}
			\item $g=g_1 \cdot \ldots \cdot g_K$ as a reduced word,
			\item $\Delta(g^{-1})=(g_K^{-1}, \dots, g_1^{-1})$,
			\item $\Delta(g_i \cdot \ldots \cdot g_j)=(g_i, \dots, g_j)$ for all $1\leq i \leq j \leq K$.
		\end{itemize} 
	\item[\textbf{[B]}] \textbf{for products of two words:}
	There exists a constant $R>0$ such that for all $g,h \in F$, the sequences $\Delta(g)$, $\Delta(h)$ and $\Delta(gh)$ written in the following form
		\begin{align*}
		\Delta(g) &= \Delta(c_1^{-1})\Delta(r_1)\Delta(c_2),\\[0.4em]
		\Delta(h) &= \Delta(c_2^{-1})\Delta(r_2)\Delta(c_3) \text{ and} \\[0.4em]
		\Delta(gh)&= \Delta(c_1^{-1})\Delta(r_3^{-1})\Delta(c_3),
		\end{align*}
where $\Delta(c_i),\Delta(r_i) \in \mathcal{P}^*$ so that $\Delta(c_i)$ is of maximal length, satisfy
	\[
	\left|\Delta(r_i)\right| \leq R,
	\]
for $i=1,2,3$.
	\end{enumerate}
\end{dfn}
In order to refer to the system of equations in property \textbf{[B]} of a given $\Delta$-decomposition, we say for short that $g$ and $h$ form a \emph{$(g,h)$-triangle}. This reflects the geometric picture of property \textbf{[B]}, as illustrated in Figure \ref{triangle}.
		\begin{figure}[h]
			\center
			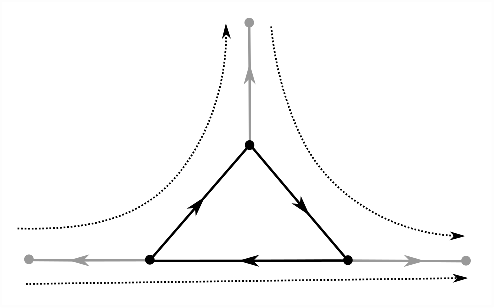
			\caption{}
			\label{triangle}
		\end{figure}
		
Moreover, we refer to the part of the $(g,h)$-triangle consisting of $c_1,c_2$ and $c_3$ as the \emph{$c$-part} and the part consisting of $r_1,r_2$ and $r_3$ as the (bounded) \emph{$r$-part}.
\subsection{$\Delta$-decomposable quasimorphisms}
Assigning a real value to each element of the pieces $\mathcal{P}$ in an appropriate way, ensures that any $\Delta$-decomposition of $F$ induces quasimorphisms on $F$. This is done in the following way:

\begin{dfn}\label{definition: delta quasimorphism}
Let $\Delta$ be a decomposition with pieces $\mathcal{P}$ and let $ \lambda \in \ell_{alt}^\infty(\mathcal{P})$, i.e. an alternating bounded map on $\mathcal{P}$. Then the map $\phi_{\lambda,\Delta}: F \to \R$ defined by
	\[
		 g \mapsto \sum_{j=1}^K \lambda(g_j),
	\]
where $\Delta(g) = (g_1, \dots, g_K)$, is called a \emph{$\Delta$-decomposable quasimorphism}.
\end{dfn}
This is indeed a legitimate name for this map which can be justified by using property \textbf{[B]} of the $\Delta$-decomposition:
\begin{prop}[Heuer, \cite{heuer17}]
Let $\Delta$ be a decomposition with pieces $\mathcal{P}$ and let $ \lambda \in \ell_{alt}^\infty(\mathcal{P})$.Then the map $\phi_{\lambda,\Delta}$ in Definition \ref{definition: delta quasimorphism} is a quasimorphism.
\end{prop}

By taking $\mathcal{P}:=\s$ and $\lambda$ arbitrary, one can recover all homomorphisms on the free group. 
We now give examples of $\Delta$-decomposable quasimorphisms: Brooks quasimorphisms on non-selfoverlapping words and Rolli quasimorphisms. 

\begin{exm}{\emph{$\Delta_w$-decomposition and Brooks quasimorphisms on non-selfoverlap\-ping words $w$.}}\;
Let $w \in F$ be a non-empty non-selfoverlapping word, meaning by definition that it cannot be written as $sms$, with $s$ non-empty. Note that in any reduced word $g\in F$, occurrences of $w$ and $w^{-1}$ as subwords cannot overlap. As $w$ is assumed to be non-selfoverlapping, any such $g$ can be represented uniquely as 
\[
g=u_1w^{\epsilon_1}u_2\cdot \ldots \cdot u_{k-1}w^{\epsilon_{k-1}}u_k,
\]
where each possibly empty $u_j$ is in reduced form, does not contain $w$ or $w^{-1}$ as subwords, and $\epsilon_j \in \{-1,+1\}$.
In this setting, define the pieces to be 
\[
\mathcal{P}_w = \{w,w^{-1}\} \cup \{u \in F\setminus \{ \mathrm{id}\}  \mid u \text{\ does not contain $w$ or $w^{-1}$ as subwords}\}.
\]

Then the \emph{$\Delta_w$-decomposition on the word $w$} is the map
\begin{align*}
\Delta_w: F &\to \mathcal{P}_w^*\\
g &\mapsto (u_1, w^{\epsilon_1}, u_2, \dots, u_{k-1},w^{\epsilon_{k-1}},u_k), 
\end{align*}
for $g$ as above.
The map $\lambda: \mathcal{P}_w \to \R$ given by 
\[
\lambda(p):= 
\left\{ \begin{array}{rl}
1, &\mathrm{if \ } p=w,\\
-1, &\mathrm{if \ } p=w^{-1},\\
0, &\mathrm{else},
\end{array}\right.
\]
induces the $\Delta_w$-decomposable quasimorphism $\phi_{\lambda, \Delta_{w}}$, namely precisely the Brooks quasimorphism on $w$. 
\end{exm}

\begin{exm}{\emph{$\Delta_{Rolli}$-decomposition and Rolli quasimorphisms.}} If $F$ is a free group of rank $\mathbf{n}$ and $\s=\{x_1,\dots,x_\mathbf{n}\}\cup \{x_1^{-1},\dots,x_\mathbf{n}^{-1}\}$ then 
any non-trivial $g \in F$ can be uniquely represented in the following form 
	\[
		g=x_{j_1}^{m_1}\cdot \ldots \cdot x_{j_k}^{m_k},
	\]
where all $m_j$ are non-zero and no consecutive $j_i\in \{1,\dots,\mathbf{n}\}$ are identical.
If we define the pieces to be $\mathcal{P}_{Rolli}=\{x_j^m \mid j \in \{1, \dots, \mathbf{n}\}, m \in \Z\setminus \{ 0\} \}$ then we define the \emph{$\Delta_{Rolli}$-decomposition} as the map 
	\begin{align*}
	\Delta_{Rolli}: F &\longrightarrow \mathcal{P}_{Rolli}^*\\
	g &\longmapsto (x_{j_1}^{m_1},\dots,x_{j_k}^{m_k}),
	\end{align*}
for $g$ as above.
Let $\lambda_1,\dots,\lambda_\mathbf{n} \in \ell_{alt}^\infty(\Z)$, namely bounded functions $\lambda_j: \Z \to \R$ satisfying $\lambda_j(-n)=-\lambda_j(n)$.
Then the map 
	\begin{align*}
	\lambda: \mathcal{P}_{Rolli} &\longrightarrow \R\\
	x_j^m &\longmapsto \lambda_j(m)
	\end{align*}
induces the quasimorphism $\phi_{\lambda,\Delta_{Rolli}}$, namely precisely the Rolli quasimorphism.
\end{exm}

\section{Proof of Theorem A (\normalfont{\lowercase{a}})}\label{section: proof}
Due to the discussion in Section \ref{section: aligned cochain complex} we can prove Theorem A (a) by showing that the cup product of the restriction to aligned cochains of cocycles representing the cohomology classes in consideration is a coboundary. Thus Theorem A (a) is a direct consequence of \begin{frthmAp*}\normalfont
Let $\phi$ be a $\Delta$-decomposable quasimorphism and for any $k >0$ let $\omega \in \mathscr{A}_b^k(F,\R)$. Then there exists $\beta  \in \mathscr{A}_b^{k+1}(F,\R)$ such that 
$$\boldsymbol{\delta} \beta = (\boldsymbol{\delta}\phi) \cup \omega.$$
\end{frthmAp*}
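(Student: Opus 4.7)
The plan is to adapt Heuer's geometric strategy from \cite{heuer17} to the aligned-cochain framework of \cite{buchermonod19}, exploiting the rigidity of alignment to weaken the hypothesis on the second factor. The key geometric observation is that for $(g_1,g_2)\in \mathcal{B}^2$ the product $g_1 g_2$ is reduced, so the cancellation piece $c_2$ of the $(g_1,g_2)$-triangle (property \textbf{[B]}) is trivial. The triangle degenerates along the geodesic from $\mathrm{id}$ to $g_1 g_2$, and $\boldsymbol{\delta}\phi(g_1,g_2) = \phi(r_1)+\phi(r_2)+\phi(r_3)$ reduces to a bounded quantity depending only on the $\Delta$-pieces in a bounded neighbourhood of the junction vertex $g_1$.

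The candidate primitive $\beta$ is constructed by replacing the unbounded value $\phi(g_1)$ appearing in the naive product $\phi\cup\omega$ by a bounded two-variable function $H\in \mathscr{A}^2_b(F,\R)$ that captures the local boundary contribution of $\phi$. A natural choice is
\[
H(g_1,g_2) \;:=\; \phi(g_1) \;-\; \phi^{\prec}\!\bigl(g_1 \mid g_1 g_2\bigr),
\]
where $\phi^{\prec}(g_1 \mid g_1 g_2) := \sum \lambda(p)$ ranges over the pieces $p$ of $\Delta(g_1 g_2)$ lying strictly before the vertex $g_1$ on the geodesic. Property \textbf{[B]} gives $\|H\|_\infty \le 2R\|\lambda\|_\infty$. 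One then sets
\[
\beta(g_1,\ldots,g_{k+1}) \;:=\; H(g_1,g_2)\,\omega(g_2,g_3,\ldots,g_{k+1}),
\]
possibly augmented by bounded correction terms built from the $\lambda$-values of $\Delta$-pieces straddling intermediate vertices, so as to absorb residual boundary discrepancies.

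Verifying $\boldsymbol{\delta}\beta = \boldsymbol{\delta}\phi\cup\omega$ on $\mathcal{B}^{k+2}$ proceeds by expanding the aligned coboundary and regrouping by $\omega$-factor; the cocycle property of $\omega$ (valid upon replacing $\omega$ by a cocycle representative of its cohomology class) collapses the $\boldsymbol{\delta}\omega$-terms. What remains are two identities: a telescoping identity $H(g_2,g_3)-H(g_1 g_2,g_3)+H(g_1,g_2)=\boldsymbol{\delta}\phi(g_1,g_2)$, which follows from the additivity of $\phi$ on sub-$\Delta$-decompositions (property \textbf{[A]}), and a stability identity $H(g_1,g_2 g_3)=H(g_1,g_2)$, which follows from property \textbf{[B]} whenever $g_1$ lies beyond the $R$-neighbourhood of the junction $g_2\mid g_3$.

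I expect the main obstacle to be the careful combinatorial bookkeeping of which $\Delta$-pieces straddle the intermediate vertices $g_1,g_1g_2,\ldots$, and the explicit design of the bounded correction cochains so that all residual discrepancies from the bounded $r$-parts cancel exactly. The aligned hypothesis reduces this to one-dimensional bookkeeping along the geodesic $\mathrm{id}\to g_1 g_2\cdots g_{k+2}$, which is precisely the gain over Heuer's original inhomogeneous setting and explains why no $\Delta$-continuity hypothesis on $\omega$ is needed here.
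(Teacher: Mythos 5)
Your geometric starting point is the right one and is exactly what the paper exploits: for an aligned pair the cancellation piece $c_2$ is trivial, so the decompositions of $g_1$, $g_2$ and $g_1g_2$ agree at head and tail up to at most $R$ pieces. But the way you structure the argument leaves the essential step undone. You write down a manifestly bounded candidate $\beta(g_1,\dots,g_{k+1})=H(g_1,g_2)\,\omega(g_2,\dots,g_{k+1})$ and then must verify $\boldsymbol{\delta}\beta=\boldsymbol{\delta}\phi\cup\omega$ \emph{exactly}. Expanding $\boldsymbol{\delta}\beta$ and using that $\omega$ is a cocycle, this forces precisely your two identities, and neither holds on the nose: the telescoping identity requires $\phi^{\prec}(g_1g_2\mid g_1g_2g_3)=\phi^{\prec}(g_1\mid g_1g_2)+\phi^{\prec}(g_2\mid g_2g_3)$, which compares three \emph{different} decompositions and fails by junction terms of size $O(R\|\lambda\|_\infty)$; and the stability identity $H(g_1,g_2g_3)=H(g_1,g_2)$ fails whenever the $g_2$--$g_3$ junction reaches within the $R$-range, with the failure multiplied by the uncontrolled value $\omega(g_2g_3,g_4,\dots,g_{k+2})$. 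So what you actually get is $\boldsymbol{\delta}\beta=\boldsymbol{\delta}\phi\cup\omega+E$ with $E$ bounded, and a bounded discrepancy is of no use: showing that such a bounded error is the coboundary of a bounded cochain is exactly the kind of statement being proved, so deferring it to unspecified ``bounded correction terms'' begs the question. Note also that the needed corrections cannot have your product shape, since the correct bounded primitive contains terms $\phi(g_j)\,\omega(z_j(g)h_1,h_2,\dots,h_k)$ whose $\omega$-arguments are tails of the decomposition of the first variable, not $(g_2,\dots,g_{k+1})$.

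The paper reverses the logic so that exactness is free and only boundedness must be proved: it sets $\beta:=\phi\cup\omega+\boldsymbol{\delta}\eta$ with the explicit (a priori unbounded) aligned cochain $\eta(g,h_1,\dots,h_{k-1})=\sum_{j=1}^{K}\phi(g_j)\,\omega(z_j(g),h_1,\dots,h_{k-1})$, where $\Delta(g)=(g_1,\dots,g_K)$ and $z_j(g)=g_{j+1}\cdots g_K$. Then $\boldsymbol{\delta}\beta=\boldsymbol{\delta}\phi\cup\omega$ holds by construction (using, as you note, that $\omega$ is a cocycle), and after expanding $\beta$, applying the cocycle relation for $\omega$ and the $c_2=\mathrm{id}$ head/tail matching, all but at most $R$ summands in each of three sums cancel, giving the bound. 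If you want to salvage your route, you should promote your $H$-type expression to an honest coboundary correction (i.e.\ exhibit the cochain whose $\boldsymbol{\delta}$ it is, which is essentially the $\eta$ above) rather than adjusting a bounded guess; as written, the proof has a genuine gap at the step ``possibly augmented by bounded correction terms.''
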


\begin{proof}Let $\phi$ and $\omega$ be as in Theorem A'  and fix a $\Delta$-decomposition on pieces $\p$ for which $\phi$ is $\Delta$-decomposable. Further let $\eta \in \mathscr{A}^k(F, \R)$ be defined by 
\begin{equation}\label{eq: choiceeta1}
\eta: (g,h_1,\dots,h_{k-1}) \longmapsto \sum_{j=1}^K \phi(g_{j})\omega(z_j(g),h_1,\dots,h_{k-1}),
\end{equation}
where $\Delta(g)=(g_1,\dots,g_K)$ and $z_j(g):=g_{j+1} \cdot \ldots \cdot g_{K}$ for $j=1,\dots,K$, and $\beta  \in \mathscr{A}_b^{k+1}(F,\R)$ by
$$ \beta=\phi \cup \omega +\boldsymbol{\delta} \eta.$$
By construction, $\boldsymbol{\delta} \beta = \boldsymbol{\delta}\phi \cup \omega$, so that we only need to show that $\beta$ is bounded. Let $(g,h_1,\dots,h_k)$ in $\mathcal{B}^{n}$ be arbitrary. Then by definition, we have that
\begin{align}\label{eq01}
\beta(g,h_1,\dots,h_k)
=&\phi(g)\omega(h_1,\dots,h_k) \\
+&\Big\{\eta(h_1,\dots,h_k)
-\eta(gh_1, h_2\dots,h_k)\nonumber\\
-&\sum_{i=1}^{k-1}(-1)^i\eta(g,h_1,\dots,h_ih_{i+1},\dots,h_k)\nonumber\\
-&(-1)^k\eta(g,h_1,\dots,h_{k-1})\Big\}\nonumber.
\end{align}
Let $\Delta(g)=(g_1,\dots,g_K), \Delta(h_1)=(h_{1,1},\dots,h_{1,L})$ and $\Delta(gh_1)=((gh_1)_1,\dots,(gh_1)_M)$ be the $\Delta$-decompositions of $g$, $h_1$ and $gh_1$. By expanding $\eta$ as in its definition \eqref{eq: choiceeta1} and by recalling that $\phi$ is as a $\Delta$-decomposable quasimorphism defined as (see Definition \ref{definition: delta quasimorphism}) \begin{equation*}
\phi(g)= \sum_{j=1}^K\phi(g_{j}),
\end{equation*}
we obtain 
\begin{align}\label{eq02}
&\quad\beta(g,h_1,\dots,h_k) \\[0.3em]&= 
\sum_{j=1}^K\phi(g_{j})
\omega(h_1,\dots,h_k)\nonumber\\
&+\sum_{j=1}^L\phi(h_{1,j})\omega(z_j(h_1), h_2, \dots, h_k)-\sum_{j=1}^M\phi((gh_1)_j)\omega(z_j(gh_1), h_2, \dots, h_k)\nonumber\\
&-\sum_{i=1}^{k-1}(-1)^i \sum_{j=1}^K\phi(g_{j})\omega(z_j(g), h_1,\dots,h_ih_{i+1},\dots,h_k)\nonumber\\
&-(-1)^k \sum_{j=1}^K \phi(g_{j})\omega(z_j(g),h_1,\dots,h_{k-1}).\nonumber
\end{align}
Using the cocycle relation 
\begin{align} \label{eq03}
\omega(z_j(g)h_1,\dots,h_k)
=&\omega(h_1,\dots,h_k)\\
-&\sum_{i=1}^{k-1}(-1)^i\omega(z_j(g),h_1,\dots,h_ih_{i+1},\dots,h_k)\nonumber\\
-&(-1)^k\omega(z_j(g),h_1,\dots,h_{k-1}),\nonumber
\end{align}
the expression in \eqref{eq02} reduces to 
\begin{align} \label{eq04}
\beta(g,h_1,\dots,h_k) = 
&\sum_{j=1}^K\phi(g_j)\omega(z_j(g)h_1,\dots,h_k)\\ \nonumber 
+&\sum_{j=1}^L\phi(h_{1,j})\omega(z_j(h_1),h_2,\dots,h_k)\\ \nonumber 
-&\sum_{j=1}^{M}\phi((gh_1)_j)\omega(z_j(gh_1),h_2,\dots,h_k).\nonumber
\end{align}

We now decompose the $(g,h_1)$-triangle into its $c$- and $r$-parts as in Definition \ref{def:Delta-dec}. Observe that since we are working with aligned chains, we have $c_2=\mathrm{id}$ so that
\begin{align*}
		\Delta(g) &= \Delta(c_1^{-1})\Delta(r_1),\\[0.4em]
		\Delta(h_1) &= \Delta(r_2)\Delta(c_3) \quad \text{and} \\[0.4em]
		\Delta(gh_1)&= \Delta(c_1^{-1})\Delta(r_3^{-1})\Delta(c_3).
		\end{align*}
We set $K'=|c_1|$ and $L'=|r_2|$ and note that since the lengths of the $r$-parts are bounded by $R$ we have 
$$|r_1|=K-K'\leq R, \quad |r_2|=L'\leq R \quad \mathrm{and} \quad |r_3|=M-K'-(L-L')\leq R.$$
Furthermore, by comparing the $c$-parts, we see that $\Delta(g)$ and $\Delta(gh_1)$ agree at the head, while $\Delta(h_1)$ and $\Delta(gh_1)$ agree at the tail of their corresponding $\Delta$-decompositions (see Figure \ref{triangle_picture1}). More precisely, we have 
\begin{align*}
g_j&=(gh_1)_j, \ \forall \ 1\leq j\leq K',\\
h_j&=(gh_1)_j,  \ \forall \ L'\leq j\leq L.
\end{align*}

\begin{figure}[h]
			\center
			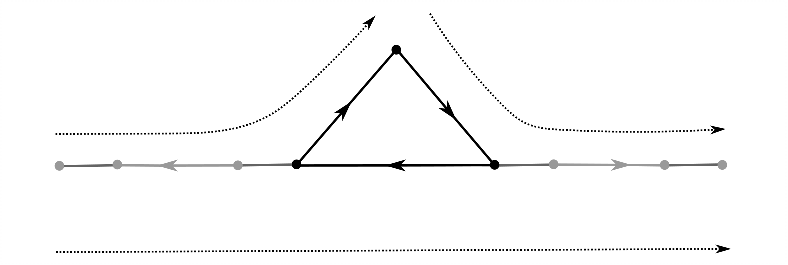
			\caption{}
			\label{triangle_picture1}
\end{figure}

Finally, if $1\leq j\leq K'$, then $z_j(g)h_1=z_j(gh_1)$, and if $L'\leq j\leq L$ then $z_j(h_1)=z_j(gh_1)$. This allows us to further reduce the expression for $\beta$ in \eqref{eq04} to 
 \begin{align*} 
\beta(g,h_1,\dots,h_k) = 
&\sum_{j=K'+1}^K\phi(g_j)\omega(z_j(g)h_1,\dots,h_k)\\[0.3em] 
+&\sum_{j=1}^{L'}\phi(h_{1,j})\omega(z_j(h_1),h_2,\dots,h_k)\\[0.3em] 
-&\sum_{j=K'+1}^{M-L+L'}\phi((gh_1)_j)\omega(z_j(gh_1),h_2,\dots,h_k),
\end{align*}
which is bounded since each of the three sums has at most $R$ summands, each of which is bounded since $\omega$ is bounded and $\phi$ is uniformly bounded on pieces of the $\Delta$-decomposition. 

\end{proof}

\begin{rem} Clearly the two proofs of Theorem A for (a) $\Delta$-decomposable quasimorphisms and (b) Brooks quasimorphisms have a similar flavour and could in fact be put in the following unified context: 
Given a $\Delta$-decomposition, $\ell>0$ and a bounded alternating function
$$\lambda:\mathcal{P}^\ell \longrightarrow \R,$$ 
by which we mean that $\lambda(g_1,\dots,g_\ell)=-\lambda(g_\ell^{-1},\dots,g_1^{-1})$, 
the function $\phi:F\rightarrow \R$ defined by
$$\phi(g)=\sum_{j=1}^{m-\ell+1} \lambda(g_j\cdot \ldots \cdot g_{j+\ell-1})$$
is a quasimorphism for which Theorem A holds. 
The case of $\Delta$-decomposable quasimorphisms corresponds to taking $\ell=1$, while Brooks quasimorphisms correspond to taking the trivial $\Delta$-decomposition with $\mathcal{P}=\mathcal{S}\cup \mathcal{S}^{-1}$ and 
$$\lambda(x_1,\dots,x_\ell)=\chi_w(x_1\cdot \ldots \cdot x_\ell)$$ 
for any word $w$ of length $\ell$. 

\end{rem}

\bibliographystyle{alpha}

\end{document}